\newtheorem{theorem}{Theorem}
\newtheorem{lemma}[theorem]{Lemma}
\theoremstyle{definition}
\newtheorem{remark}[theorem]{Remark}
\newtheorem{example}[theorem]{Example}
\newcommand{\tb}{\textbf}
\newcommand{\Z}{\mathbb{Z}}
\newcommand{\tn}{\textnormal}
\newcommand{\se}{\subseteq}
\newcommand{\ex}{\tn{ex}}
\newcommand{\ol}{\overline}
\newcommand{\lam}{\lambda}
\newcommand{\on}{\operatorname}
\newcommand{\ap}{\ol{\mf{A}}2\ol{\mf{P}}}
\newcommand{\mf}{\mathfrak}
\newcommand{\tcb}{\textcolor{blue}}
\title{Proof of a $K$-theoretic polynomial conjecture of Monical, Pechenik, and Searles}
\author{Laura Pierson \\ University of Waterloo \\ \href{mailto:lcpierson73@gmail.com}{lcpierson73@gmail.com}}
\begin{document}

\maketitle

\begin{abstract}
    As part of a program to develop $K$-theoretic analogues of combinatorially important polynomials, Monical, Pechenik, and Searles (2021) proved two expansion formulas $\ol{\mf{A}}_a = \sum_b Q_b^a(\beta)\ol{\mf{P}}_b$ and $\ol{\mf{Q}}_a = \sum_b M_b^a(\beta)\ol{\mf{F}}_b,$ where each of $\ol{\mf{A}}_a$, $\ol{\mf{P}}_a$, $\ol{\mf{Q}}_a$ and $\ol{\mf{F}}_a$ is a family of polynomials that forms a basis for $\Z[x_1,\dots,x_n][\beta]$ indexed by weak compositions $a,$ and $Q_b^a(\beta)$ and $M_b^a(\beta)$ are monomials in $\beta$ for each pair $(a,b)$ of weak compositions. The polynomials $\ol{\mf{A}}_a$ are the \emph{\tb{\tcb{Lascoux atoms}}}, $\ol{\mf{P}}_a$ are the \emph{\tb{\tcb{kaons}}}, $\ol{\mf{Q}}_a$ are the \emph{\tb{\tcb{quasiLascoux polynomials}}}, and $\ol{\mf{F}}_a$ are the \emph{\tb{\tcb{glide polynomials}}}; these are respectively the $K$-analogues of the \emph{\tb{\tcb{Demazure atoms}}} $\mf{A}_a$, the \emph{\tb{\tcb{fundamental particles}}} $\mf{P}_a$, the \emph{\tb{\tcb{quasikey polynomials}}} $\mf{Q}_a$, and the \emph{\tb{\tcb{fundamental slide polynomials}}} $\mf{F}_a$. Monical, Pechenik, and Searles conjectured that for any fixed $a,$ $\sum_b Q_b^a(-1), \sum_b M_b^a(-1) \in \{0,1\},$ where $b$ ranges over all weak compositions. We prove this conjecture using a sign-reversing involution.
\end{abstract}

\section{Introduction}

We prove a conjecture of Monical, Pechenik, and Searles from \cite{Monical_Pechenik_Searles_2021} stating that two alternating sums involving polynomials arising from \emph{\tb{\tcb{combinatorial $K$-theory}}} are always equal to either 0 or 1. These polynomials were defined as part of a project to extend the combinatorics associated to the ring ${\sf{Sym}}_n$ of \emph{\tb{\tcb{symmetric polynomials}}} in $n$ variables to the larger ring ${\sf{Poly}}_n := \Z[x_1,\dots,x_n]$ of arbitrary polynomials in $n$ variables, and then further to the ring ${\sf{Poly}}_n[\beta]$ of $K$-theoretic deformations of these polynomials. The motivation for defining these $K$-theoretic analogues arises from the connections these polynomials have to algebraic geometry, specifically to \emph{\tb{\tcb{Schubert calculus}}}, which studies the cohomology rings of \emph{\tb{\tcb{flag varieties}}}. The basic idea of $K$-theory is to deform the structure of cohomology rings by introducing an additional parameter $\beta$. Polynomials coming from cohomology rings often also have corresponding $K$-analogues, and conversely, when polynomials arising from combinatorics have nice $K$-analogues, that indicates that they may have geometric interpretations. See \cite{buch2005combinatorial} for an introduction to combinatorial $K$-theory.

A classic basis for ${\sf{Sym}}_n$ is the \emph{\tb{\tcb{Schur polynomials}}} $s_\lam$, which have rich connections to representation theory and algebraic geometry. These polynomials are indexed by \emph{\tb{\tcb{partitions}}} $\lam=(\lam_1,\lam_2,\dots,\lam_n)$, i.e. finite nondecreasing sequences of positive integers $\lam_1 \ge \lam_2 \ge \dots \ge \lam_n$. Schur polynomials are a special case of \emph{\tb{\tcb{Schubert polynomials}}} $\mf{S}_a$, which were introduced by Lascoux and Sch\"utzenberger in \cite{LS82} as a basis for ${\sf{Poly}}_n$ that is closely connected to Schubert calculus. The Schubert polynomials are indexed by \emph{\tb{\tcb{weak compositions}}} $a=(a_1,\dots,a_n),$ i.e. finite sequences of nonnegative integers. Various related bases for ${\sf{Poly}}_n$ have also been studied, each also indexed by weak compositions $a$, including the \emph{\tb{\tcb{Demazure characters}}} $\mf{D}_a$ \cite{Dem74}, the \emph{\tb{\tcb{Demazure atoms}}} $\mf{A}_a$ \cite{LS90}, the \emph{\tb{\tcb{(fundamental) slide polynomials}}} $\mf{F}_a$ \cite{AS17}, the \emph{\tb{\tcb{(fundamental) particles}}} $\mf{P}_a$ \cite{Sea17}, and the \emph{\tb{\tcb{quasikey polynomials}}} $\mf{Q}_a$ \cite{AS18b}. Hicks and Niese showed in \cite{hicks2024quasisymmetric} that all these polynomials arise naturally using a quasisymmetric analogue of the \emph{\tb{\tcb{divided difference operator}}}, and some of these polynomials also arise naturally from representation theory. These bases all have $K$-theoretic lifts to corresponding bases for ${\sf{Poly}}_n[\beta]$: the \emph{\tb{\tcb{Grothendieck polynomials}}} $\ol{s}_\lam$ and $\ol{\mf{S}}_a$ \cite{FK94}, the \emph{\tb{\tcb{Lascoux polynomials}}} $\ol{\mf{D}}_a$, the \emph{\tb{\tcb{Lascoux atoms}}} $\ol{\mf{A}}_a$ \cite{Mon16}, the \emph{\tb{\tcb{glide polynomials}}} $\ol{\mf{F}}_a$ \cite{PS17}, the \emph{\tb{\tcb{kaons}}} $\ol{\mf{P}}_a$ \cite{Monical_Pechenik_Searles_2021}, and the \emph{\tb{\tcb{quasiLascoux polynomials}}} $\ol{\mf{Q}}_a$ \cite{Monical_Pechenik_Searles_2021}. An integrable vertex model for Lascoux polynomials and Lascoux atoms has been developed in \cite{Buciumas.Scrimshaw.Weber}, using ideas from statistical mechancis.

The authors of \cite{Monical_Pechenik_Searles_2021} proved two expansion formulas involving these $K$-theoretic polynomials: $$\ol{\mf{A}}_a = \sum_b Q_b^a(\beta) \ol{\mf{P}}_b, \hspace{2cm} \ol{\mf{Q}}_a = \sum_b M_b^a(\beta) \ol{\mf{F}}_b,$$ where each $Q_b^a(\beta)$ and $M_b^a(\beta)$ is a monomial of the form $m \beta^n$ for nonnegative integers $m$ and $n$ (\cite{Monical_Pechenik_Searles_2021}, Theorems 3.12 and 4.12). They then conjectured that for all $b$, $\sum_b Q_b^a(-1)\in\{0,1\}$ and $\sum_b M_b^a(-1)\in\{0,1\}.$ We prove this conjecture and also characterize exactly when the sum is 0 and when it is 1:

\begin{theorem}[cf. \cite{Monical_Pechenik_Searles_2021}, Conjecture 1.4]\label{thm:alt_sum}
    If the nonzero parts of $a$ are weakly decreasing, then $$\sum_b Q_b^a(-1) = \sum_b M_b^a(-1) = 1,$$ and otherwise $$\sum_b Q_b^a(-1) = \sum_b M_b^a(-1) = 0,$$ where the sums are taken over all weak compositions $b.$
\end{theorem}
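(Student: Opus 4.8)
The first move is to turn each alternating sum into a signed enumeration that an involution can act on. By Theorems 3.12 and 4.12 the coefficients $Q_b^a(\beta)$ and $M_b^a(\beta)$ are explicit monomials $m\beta^e$, in which the exponent $e$ counts the ``extra'' ($K$-theoretic) boxes that separate the shape $a$ from the shape $b$. I would first repackage these formulas as a single combinatorial model: a (multi)set $\mc{K}_a$ of fillings, each carrying a target composition $b(T)$ and an excess statistic $e(T)=|b(T)|-|a|\ge 0$, so that $\sum_b Q_b^a(\beta)=\sum_{T\in\mc{K}_a}\beta^{e(T)}$ and therefore
$$\sum_b Q_b^a(-1)=\sum_{T\in\mc{K}_a}(-1)^{e(T)};$$
an entirely parallel set $\mc{M}_a$ handles the $M_b^a$. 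The theorem is then the assertion that this signed count equals $1$ when the nonzero parts of $a$ weakly decrease and $0$ otherwise.

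Next I would build a sign-reversing involution $\iota$ on $\mc{K}_a$. The design principle is local: in each object $T$, scan from a fixed corner and locate the first site at which a single extra box can be reversibly created or destroyed, then let $\iota(T)$ perform exactly that move, so that $e$ changes by one and $(-1)^{e(T)}$ flips sign. The work is to choose the scanning rule so that (i) the move never violates the admissibility conditions cutting out $\mc{K}_a$, (ii) the altered object has the same first active site, so that $\iota$ is genuinely an involution, and (iii) $\iota$ is defined on every object except a controlled family of fixed points. Verifying (i)--(iii) against the precise kaon model is where the combinatorics does its real work.

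The crux is the fixed-point analysis. Since $\iota$ changes $e$ by one, a fixed point must have no box available either to add or to remove; I expect the unique candidate to be the ``ground'' object of shape $a$ with its rigid canonical filling and no extra boxes ($b=a$, $e=0$). The claim is that this ground object is genuinely fixed exactly when the nonzero parts of $a$ weakly decrease: then it is the lone survivor and contributes $(-1)^0=1$, while every other object---including other excess-$0$ fillings and all higher-$\beta$ fillings---carries an active site and is cancelled in an $\iota$-pair, so the signed count is $1$. If instead some nonzero part is immediately followed (ignoring zeros) by a strictly larger one, then even the ground filling inherits a forced box-creation move at that ascent, so it too is cancelled and $\mc{K}_a$ collapses to $0$. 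Establishing this dichotomy---and in particular that the rigid ground filling is the only fixed point in the weakly-decreasing case---is the step I expect to be the main obstacle, as it is the sole place where the hypothesis on $a$ enters and it requires understanding precisely when a dominant filling is rigid.

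Finally, to obtain both equalities I would run the same construction on the glide model $\mc{M}_a$; the quasiLascoux/glide tableaux parallel the Lascoux-atom/kaon ones, so the identical scanning-and-toggling involution should apply with the same fixed-point dichotomy. Alternatively one can exploit the commuting square relating the four expansions---quasiLascoux into Lascoux atoms and glide into kaons on one side, the two expansions of the theorem on the other---to express one signed sum in terms of the other; but since that route still requires controlling the $(-1)$-specialized sums of the intermediate transition coefficients by the very same involution, I would treat the two models in parallel rather than reduce one to the other.
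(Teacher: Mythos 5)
Your plan follows the same route as the paper: Theorems \ref{thm:a2p} and \ref{thm:q2f} convert the two sums into signed enumerations $\sum_T (-1)^{|T|-|a|}$ over $\ap(a)$ and $\ol{\mf{Q}}2\ol{\mf{F}}(a)$, a sign-reversing involution toggles a single free entry, and the unique fixed point in the weakly-decreasing case is the rigid filling of weight $a$ with no free entries. But the three items you set aside as ``the real work'' --- your conditions (i)--(iii) and the fixed-point dichotomy --- are the entire proof, and your proposal does not supply the mechanism that makes them true. In particular, the successful scanning rule is not a positional scan ``from a fixed corner'': the paper's involution $\iota$ runs over entry \emph{values} $m$ from smallest to largest and, for each $m$, over columns from rightmost to leftmost, toggling a free $m$ in the first column where the result stays in $\ap(a)$. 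The engine is then Lemma \ref{lem:remove_min}: the value actually toggled is the largest $m$ such that, for every $m'<m$ in $T$, every box of row $m'$ contains exactly the single entry $m'$ and row $m'$ is weakly longer than all rows above it. This one lemma delivers your (ii) --- the selected value and column are unchanged after toggling, because the selection criterion is insensitive to adding or removing entries larger than $m$ --- and your (iii), since it shows a tableau has no active site precisely when every box holds just its row index and the nonempty rows weakly decrease in length, which is exactly where the hypothesis on $a$ enters. Without this lemma (or some substitute), a generic ``first active site'' rule gives you no way to verify that the site is re-selected after the toggle, so the involution property is not established.

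One concrete inaccuracy: your closing claim that the ``identical'' involution applies to the glide model ``with the same fixed-point dichotomy'' is not quite right as stated. In $\ol{\mf{Q}}2\ol{\mf{F}}(a)$, condition (S.5) is relaxed to (S.$5'$), so a potential fixed point could a priori have all copies of some value $m$ appearing as anchors of a single row strictly \emph{higher} than row $m$; this cannot happen in $\ap(a)$. The paper closes this loophole with a separate inductive argument using the quasiYamanouchi condition: a misplaced $m$ forces $m+1$ to appear and be misplaced as well, and so on, until the largest entry $M$ would force an entry $M+1$ to exist, a contradiction. So the glide side is parallel but not verbatim. Your outline names the correct destination --- including where the hypothesis on $a$ must enter --- but it becomes a proof only once the explicit scanning order, the characterization lemma, and this extra glide-side argument are supplied.
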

Theorem~\ref{thm:alt_sum} is suggestive of an Euler characteristic calculation and may provide some insight into the geometric context of these polynomials.

We will explain the relevant definitions and prior results in \S \ref{sec:alt_sum_background}, and then in \S \ref{sec:alt_sum_proof}, we will prove Theorem \ref{thm:alt_sum} and give some examples to illustrate it.

\section{Background}\label{sec:alt_sum_background}

In this section, we will define the four families of polynomials studied here, in the following order:
\begin{enumerate}
    \item The \emph{\tb{\tcb{kaons}}} $\ol{\mf{P}}_a$, which are generating functions for \emph{\tb{\tcb{mesonic glides}}}.
    \item The \emph{\tb{\tcb{Lascoux atoms}}} $\ol{\mf{A}}_a$, which are generating functions for \emph{\tb{\tcb{set-valued skyline fillings}}}.
    \item The \emph{\tb{\tcb{glide polynomials}}} $\ol{\mf{F}}_a$, which are sums of kaons.
    \item The \emph{\tb{\tcb{quasiLascoux polynomials}}} $\ol{\mf{Q}}_a,$ which are sums of Lascoux atoms.
\end{enumerate}
We will also explain the expansion formulas from \cite{Monical_Pechenik_Searles_2021} that express Lascoux atoms in terms of kaons and quasiLascoux atoms in terms of glide polynomials, which will be the main tools for our proof.

\begin{remark}
    The overlines indicate that these are $K$-theoretic polynomials (i.e. they involve $\beta$). The corresponding versions without the overlines are obtained by setting $\beta = 0.$ The letter $\mf{A}$ is an ``A" for ``atom." The letter $\mf{Q}$ is a ``Q" for ``quasi." The letter $\mf{P}$ is a ``P" for ``particle," because the kaons $\ol{\mf{P}}_a$ are a $K$-analogue of the particles $\mf{P}_a.$ The letter $\mf{F}$ is an ``F" for ``fundamental," because the glides $\ol{\mf{F}}_a$ are a $K$-analogue of the fundamental slides $\mf{F}_a,$ which lift the fundamental basis $F_a$ of the ring ${\sf{QSym}}$ of quasisymmetric functions. The name ``atoms" makes sense because the Lascoux atoms form a basis of ${\sf{Poly}}_n[\beta]$ and hence a set of fundamental building blocks. ``Kaons" are named after a type of subatomic particle, which fits because they are ``subatomic" in the sense that Lascoux atoms can be built out of kaons, and also because in particle physics, the symbol for kaons is ``K." The name ``meson" comes from the name in particle physics of the family of particles that includes kaons (kaons are also called ``K mesons"). The name ``skyline fillings" comes from the fact that the diagrams look like city skylines  turned sideways.
\end{remark}



\subsection{Kaons \texorpdfstring{$\ol{\mf{P}}_a$}{P}}

First we introduce the definitions needed for the kaons. A \emph{\tb{\tcb{weak komposition}}} $b=(b_1,\dots,b_n)$ is a weak composition whose entries are colored red or black, and its \emph{\tb{\tcb{excess}}} $\ex(b)$ is the number of red entries. Let $a=(a_1,\dots,a_n)$ be a weak composition with nonzero entries at positions $n_1 < n_2 < \dots < n_\ell$. We also set $n_0 = 0$. A weak komposition $b$ is a \emph{\tb{\tcb{mesonic glide}}} of $a$ if the following three properties are satisfied for every $1\le j \le \ell$:
\begin{enumerate}[leftmargin=1.5cm]
    \item[(G.$1'$)] $a_{n_j} = b_{n_{j-1}+1}+\dots + b_{n_j} - \tn{ex}(b_{n_{j-1}+1},\dots, b_{n_j})$.
    \item[(G.$3'$)] The leftmost nonzero entry of $(b_{n_{j-1}+1},\dots, b_{n_j})$ is black.
    \item [(G.$4'$)] $b_{n_j}\ne 0.$
\end{enumerate}

In our examples below, the red entries are shown in bold with a line over them to help them stand out.

\begin{example}
    The weak komposition $b = (2,{\bf{\color{red}\ol{2}}},1,{\bf{\color{red}\ol{3}}},{{\bf{\color{red}\ol{2}}}})$ is a mesonic glide of $a = (0,3,0,0,4),$ since 
    \begin{align*}
        a_{n_1} &= a_2 = 3 = 2 + {\bf{\color{red}\ol{2}}} - \tn{ex}(2,{\bf{\color{red}\ol{2}}}), \\
        a_{n_2} &= a_5 = 4 = 1 + {\bf{\color{red}\ol{3}}} + {\bf{\color{red}\ol{2}}} - \ex(1,{\bf{\color{red}\ol{3}}},{\bf{\color{red}\ol{2}}}),
    \end{align*}
    the leftmost entries of $(2,{\bf{\color{red}\ol{2}}})$ and $(1,{\bf{\color{red}\ol{3}}},{\bf{\color{red}\ol{2}}})$ are black, $b_{n_1} = b_2 = {\bf{\color{red}\ol{2}}}\ne 0,$ and $b_{n_2} = b_5 = {\bf{\color{red}\ol{2}}}\ne 0.$
\end{example}

Write $\mathbf{x}^{b} := x_1^{b_1}\dots x_n^{b_n}.$ The \emph{\tb{\tcb{kaon}}} $\ol{\mf{P}}_a$ is the generating function $$\ol{\mf{P}}_a := \sum_{b\tn{ a mesonic glide of }a} \beta^{\ex(b)}\mathbf{x}^b.$$ In the example below, we show the $x$ variables corresponding to bolded red entries as also being bolded and red to help illustrate the correspondence. This coloring of the $x$ variables is not actually part of the definition of the polynomials.

\begin{example}
    For $a=(0,2,0,1),$ the possible mesonic slides are $$\begin{array}{cccc}
        (0,2,0,1) & (1,1,0,1) & (1,{\bf{\color{red}\ol{2}}},0,1) & (2,{\bf{\color{red}\ol{1}}},0,1) \\
        (0,2,1,{\bf{\color{red}\ol{1}}}) & (1,1,1,{\bf{\color{red}\ol{1}}}) & (1,{\bf{\color{red}\ol{2}}},1,{\bf{\color{red}\ol{1}}}) & (2,{\bf{\color{red}\ol{1}}},1,{\bf{\color{red}\ol{1}}}) \\
    \end{array}$$
    so the corresponding kaon is $$\ol{\mf{P}}_{(0,2,0,1)} = x_2^2x_4 + x_1x_2x_4 + \beta x_1{\color{red}{\boldsymbol{\ol{x_2}^2}}} x_4 + \beta x_1^2{\color{red}{\boldsymbol{\ol{x_2}}}}x_4 + \beta x_2^2x_3{\color{red}{\boldsymbol{\ol{x_4}}}} + \beta x_1x_2x_3{\color{red}{\boldsymbol{\ol{x_4}}}} + \beta^2 x_1{\color{red}{\boldsymbol{\ol{x_2}^2}}} x_3 {\color{red}{\boldsymbol{\ol{x_4}}}} + \beta^2 x_1^2 {\color{red}{\boldsymbol{\ol{x_2}}}}x_3 {\color{red}{\boldsymbol{\ol{x_4}}}}.$$
\end{example}

\begin{remark}
    We follow the seemingly odd numbering (G.$1'$), (G.$3'$), and (G.$4'$) to match the notation in \cite{Monical_Pechenik_Searles_2021}. They use this notation because the conditions (G.$1'$), (G.$3'$), and (G.$4'$) are a modification of the conditions (G.1), (G.2), and (G.3) used to define the glide polynomials $\ol{\mf{F}}_a,$ which were originally defined before the kaons $\ol{\mf{P}}_a$. We choose here to define the kaons $\ol{\mf{P}}_a$ first and then the glides $\ol{\mf{F}}_a$, since it is simpler to define the glides $\ol{\mf{F}}_a$ in terms of the kaons $\ol{\mf{P}}_a$. 
\end{remark}

\begin{remark}
    Both sets of glide conditions (G.1), (G.2), (G.3) and (G.$1'$), (G.$3'$), (G.$4'$) are themselves modifications of a simpler set of \emph{\tb{\tcb{slide conditions}}}, which are used to define the fundamental particles $\mf{P}_a$ and the fundamental slide polynomials $\mf{F}_a$. The particles $\mf{P}_a$ and slides $\mf{F}_a$ are the respective $\beta = 0$ cases of $\ol{\mf{P}}_a$ and $\ol{\mf{F}}_a$ and hence do not involve the red entries. These sorts of slide and glide moves that allow entries to be moved in only one direction arise naturally from the study of the ring of \emph{\tb{\tcb{quasisymmetric polynomials}}} ${\sf QSym}_n$, which sits between ${\sf Sym}_n$ and ${\sf Poly}_n$ (i.e. ${\sf Sym}_n \subset {\sf QSym}_n \subset {\sf Poly}_n$). Much of the combinatorics of ${\sf Sym}_n$ was originally extended to ${\sf QSym}_n$ before it was extended to ${\sf Poly}_n$. Adding the red entries to the ``slides" to create the ``glides" turns out to be the natural way to introduce the extra parameter $\beta.$
\end{remark}

\subsection{Lascoux atoms \texorpdfstring{$\ol{\mf{A}}_a$}{A}}

Next, we introduce the definitions needed for the Lascoux atoms. A \emph{\tb{\tcb{skyline diagram}}} of shape $a = (a_1,\dots, a_n)$ consists of $n$ left justified rows of boxes such that the $i$th row from the bottom has $a_i$ boxes. A \emph{\tb{\tcb{set-valued skyline filling}}} $T$ of shape $a$ is a filling of each box of the skyline diagram of $a$ with a nonempty set of positive integers. The largest entry of each box is called the \emph{\tb{\tcb{anchor}}}, and the other entries are \emph{\tb{\tcb{free}}}. We say $T$ is \emph{\tb{\tcb{semistandard}}} if it satisfies the following properties:
\begin{itemize}[leftmargin=1.5cm]
    \item[(S.1)] There are no repeat entries in any column.
    \item[(S.2)] The entries weakly decrease across rows, in the sense that the smallest entry in each box is at least the largest entry in the box to its right.
    \item[(S.3)] Suppose the anchors $\alpha,\beta,$ and $\gamma$ satisfy all of the following conditions:
    \begin{itemize}
        \item $\alpha$ is immediately to the right of $\beta$.
        \item The row containing $\alpha$ and $\beta$ is at least as long as the row containing $\gamma$ if it is lower than the row containing $\gamma$, or is strictly longer if it is higher than the row containing $\gamma.$
        \item $\gamma$ is either above $\alpha$ in the same column or below $\beta$ in the same column.
    \end{itemize}
    Then we must have either $\gamma < \alpha$ or $\gamma > \beta$.

    Equivalently, this says that if $\alpha,\beta,$ and $\gamma$ are arranged in either an ``upside down L" or ``backwards L" as shown below such that the row containing the base of the L is longer, then $\gamma$ cannot be equal to or between $\alpha$ and $\beta$:
    $$\begin{ytableau}\none & \gamma \\ \none & \vdots \\ \beta & \alpha \end{ytableau} \hspace{2cm} \begin{ytableau} \beta & \alpha \\ \vdots & \none \\ \gamma & \none \end{ytableau}$$
     (Note that since $\alpha$ is to the right of $\beta,$ we must have $\alpha \le \beta$ by (S.2)).
    \item[(S.4)] Each free entry is assigned to the box in its column with the smallest possible anchor such that (S.2) is satisfied (so choosing the column of a free entry uniquely determines its row as well).
    \item[(S.5)] Each anchor in the leftmost column equals the index of its row.
\end{itemize}
We write $\ol{\mf{A}}{\sf{SSF}}(a)$ for the set of all semistandard skyline tableaux of shape $a.$ The \emph{\tb{\tcb{weight}}} $\on{wt}(T)$ of a tableau $T$ is the weak composition whose $i$th part is the number of times $i$ appears as an entry in $T$. 

The \emph{\tb{\tcb{Lascoux atom}}} $\ol{\mf{A}}_a$ is the generating function $$\ol{\mf{A}}_a := \sum_{T\in \ol{\mf{A}}{\sf{SSF}}(a)} \beta^{\ex(T)}\mathbf{x}^{\on{wt}(T)}.$$ 

\begin{remark}
    The Lascoux atoms are the $K$-analogue of the Demazure atoms $\mf{A}_a$, which are the $\beta=0$ case and hence involve only the anchors and no free entries. The Demazure atoms are a modification of the \emph{\tb{\tcb{Demazure characters}}} (also called \emph{\tb{\tcb{key polynomials}}}) from \cite{Dem74}, which were originally defined algebraically in terms of divided difference operators and were later shown in \cite{LS90} to also have a combinatorial description in terms of \emph{\tb{\tcb{key tableaux}}}. We can think of these key tableaux and skyline tableaux as nonsymmetric analogues of SSYT. Although the rules for defining these tableaux are more complicated, they turn out to have some nice associated combinatorics that is analogous to the combinatorics of SSYT, such as analogues of the Littlewood-Richardson rule for multiplying Schur polynomials (\cite{brubaker2021colored}).
\end{remark}



\subsection{Transition formula from Lascoux atoms \texorpdfstring{$\ol{\mf{A}}_a$}{A} to kaons \texorpdfstring{$\ol{\mf{P}}_a$}{P}}

We now give the definitions needed to state the $\ol{\mf{A}}_a$ to $\ol{\mf{P}}_a$ transition formula. A skyline tableau $T$ has the \emph{\tb{\tcb{meson-highest}}} property if for every $i$ that shows up as an entry in $T$, at least one of the following holds:
\begin{itemize}
    \item There is an $i$ that is an anchor in row $i$ (i.e. row $i$ is nonempty).
    \item There is an occurrence of the next smallest entry (which we denote $i^\uparrow$) that is weakly to the right of and in a different box from the leftmost $i.$
\end{itemize}
The set $\ap(a)$ consists of all semistandard meson-highest tableaux of shape $a.$ We write $|a|$ for the sum of the entries of $a$ and $|T|$ for the total number of entries of $T$. We can now state the first expansion formula:

\begin{theorem}[Monical, Pechenik, and Searles \cite{Monical_Pechenik_Searles_2021}, Theorem 3.12]\label{thm:a2p}
    The kaon expansion of the Lascoux atoms is $$\ol{\mf{A}}_a = \sum_{T\in \ap(a)}\beta^{|T|-|a|}\ol{\mf{P}}_{\tn{wt}(T)}.$$
\end{theorem}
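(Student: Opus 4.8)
The plan is to prove the identity by expanding both sides in the monomial basis of $\Z[x_1,\dots,x_n][\beta]$ and producing an explicit weight-preserving bijection between the indexing objects. Unwinding each kaon on the right via its definition gives
\[
\sum_{T\in\ap(a)}\beta^{|T|-|a|}\,\ol{\mf{P}}_{\wt(T)}
=\sum_{T\in\ap(a)}\ \sum_{b\tn{ a mesonic glide of }\wt(T)}\beta^{(|T|-|a|)+\ex(b)}\,\mathbf{x}^{b},
\]
while the left side is $\sum_{T'\in\ol{\mf{A}}{\sf{SSF}}(a)}\beta^{|T'|-|a|}\,\mathbf{x}^{\wt(T')}$, using that each box of a skyline filling has exactly one anchor, so $\ex(T')=|T'|-|a|$. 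It thus suffices to build a bijection $\Phi\colon T'\mapsto(T,b)$ from semistandard skyline fillings of shape $a$ to pairs with $T\in\ap(a)$ and $b$ a mesonic glide of $\wt(T)$ such that $\mathbf{x}^{\wt(T')}=\mathbf{x}^{b}$. The powers of $\beta$ then match for free: summing condition (G.$1'$) over all blocks yields $|T'|-\ex(b)=|\wt(T)|=|T|$, i.e. $|T'|-|a|=(|T|-|a|)+\ex(b)$, so once $\Phi$ respects the underlying monomial the two generating functions agree term by term.

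To construct $\Phi$ I would extract a canonical meson-highest ``core'' from each filling. Meson-highestness is the $K$-theoretic analogue of quasi-Yamanouchiness, so the natural device is a monotone straightening: introduce local moves on skyline fillings of shape $a$ that are the tableau-level shadows of the gliding operations, and show that the equivalence classes they generate each contain a unique meson-highest representative $T$. A filling $T'$ is then sent to its representative $T$ together with the komposition $b$ that records its location within the class --- each occurrence of a value that has been glided away from its block-top and each free entry that is redundant relative to $T$ is colored red, giving $b$ with $\mathbf{x}^{b}=\mathbf{x}^{\wt(T')}$. The inverse of $\Phi$ replays the moves in reverse, reconstructing $T'$ from the blockwise data of $b$; specializing $\beta\to 0$ should collapse this to the classical Demazure-atom-to-particle expansion, a useful consistency check on the uncolored skeleton.

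The crux, and the step I expect to be hardest, is establishing the well-definedness of this straightening together with the verification that the recorded $b$ is genuinely a mesonic glide of $\wt(T)$. Two difficulties are intertwined. First, the semistandardness of skyline fillings is phrased columnwise through (S.1)--(S.3) and the placement rule (S.4)--(S.5), whereas the glide conditions (G.$1'$)--(G.$4'$) are phrased blockwise along the support of $\wt(T)$; translating each local move faithfully between these two languages is where most of the work lies, and one must check in particular that no move violates the ``L-shape'' condition (S.3). Second, the $K$-theoretic free entries must be tracked with care, since the coloring has to certify that the leftmost nonzero entry of every block stays black (G.$3'$) and that the anchor position of each block remains nonzero (G.$4'$). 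I would organize the argument as a chain of lemmas --- termination of the straightening, confluence so that the representative $T$ is independent of the order of moves, meson-highestness of $T$, and validity of $b$ --- after which the reverse moves furnish a two-sided inverse and the term-by-term identity above completes the proof.
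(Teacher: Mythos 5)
First, a point of reference: this paper does not prove Theorem~\ref{thm:a2p} at all --- it is quoted from \cite{Monical_Pechenik_Searles_2021} (their Theorem 3.12) and used as a black box in the proof of Theorem~\ref{thm:alt_sum}. So there is no internal proof to compare yours against; your proposal has to stand on its own as a proof of the cited result, and as written it does not.

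Your reduction is correct as far as it goes. Expanding both sides monomially, using $\ex(T')=|T'|-|a|$ on the left (each box has exactly one anchor) and the definition of $\ol{\mf{P}}_{\wt(T)}$ on the right, and then summing (G.$1'$) over the blocks to get $|T'|-|a|=(|T|-|a|)+\ex(b)$ whenever $\mathbf{x}^{\wt(T')}=\mathbf{x}^{b}$: all of this is sound, and it correctly reduces the theorem to the existence of a weight-preserving bijection $\Phi$ from $\ol{\mf{A}}{\sf{SSF}}(a)$ to pairs $(T,b)$ with $T\in\ap(a)$ and $b$ a mesonic glide of $\wt(T)$. The gap is that $\Phi$ is never constructed. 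The ``local moves'' are not defined --- you never say which entries move where, how free entries are created or absorbed, or why a single move preserves (S.1)--(S.5) and the shape $a$. Termination of the straightening, confluence, uniqueness of the meson-highest representative in each equivalence class, the claim that the fiber over a given $T$ is exactly the set of mesonic glides of $\wt(T)$ (both that the recorded coloring satisfies (G.$1'$), (G.$3'$), (G.$4'$), and conversely that every such $b$ is realized by exactly one filling), and the two-sided invertibility of $\Phi$ are all flagged as lemmas you ``would'' prove. That list is the entire combinatorial content of the theorem; what you have actually verified --- the $\beta$-exponent bookkeeping --- is the routine part. The plan has the right general shape (a canonical highest representative whose fiber is indexed by glides is indeed how expansions of this kind are typically established), but a plan with the decisive steps deferred is not a proof, and nothing in the proposal rules out the usual failure modes of such arguments, e.g.\ non-confluent moves producing two distinct meson-highest representatives of the same filling.
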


\begin{example}\label{ex:a2p_0}
    The tableaux in $\ap(0,0,1,2)$ are below, grouped into columns by number $|T|-|a|$ of free entries (i.e. the exponent on $\beta$), with the anchors shown bolded and the other entries unbolded. The corresponding term in the kaon expansion of $\ol{\mf{A}}_{(0,0,2,2)}$ is listed below each tableau. Note that there are actually an extra two rows numbered 1 and 2 at the bottom of each of these tableaux, which we omit because they are empty.
    $$\arraycolsep=12pt\def\arraystretch{1.5}\begin{array}{llllll}
        \ytableausetup{boxsize=2em}
        \begin{ytableau}
            \tb{4} & \tb{4} \\
            \tb{3}
        \end{ytableau} & 
        \begin{ytableau}
            \tb{4} & \tb{4}3 \\
            \tb{3}
        \end{ytableau} &
        \begin{ytableau}
            \tb{4} & \tb{4}3 \\
            \tb{3}2
        \end{ytableau} &
        \begin{ytableau}
            \tb{4} & \tb{4}32 \\
            \tb{3}2
        \end{ytableau} &
        \begin{ytableau}
            \tb{4} & \tb{4}32 \\
            \tb{3}21
        \end{ytableau} &
        \begin{ytableau}
            \tb{4} & \tb{4}321 \\
            \tb{3}21
        \end{ytableau}\\
        \ol{\mf{P}}_{(0,0,1,2)} & \beta\cdot \ol{\mf{P}}_{(0,0,2,2)} & \beta^2\cdot\ol{\mf{P}}_{(0,1,2,2)} & \beta^3\cdot\ol{\mf{P}}_{(0,2,2,2)} & \beta^4\cdot \ol{\mf{P}}_{(1,2,2,2)} & \beta^5\cdot\ol{\mf{P}}_{(2,2,2,2)} \\ \\
        & & \begin{ytableau}
            \tb{4} & \tb{4}3 \\
            \tb{3}1
        \end{ytableau} & 
        \begin{ytableau}
            \tb{4} & \tb{4}31 \\
            \tb{3}1
        \end{ytableau} \\
        & & \beta^2\cdot\ol{\mf{P}}_{(1,0,2,2)} & \beta^3 \cdot \ol{\mf{P}}_{(2,0,2,2)} \\        
    \end{array}$$   
\end{example}

\begin{example}\label{ex:a2p_1}
    The tableaux in $\ol{\mf{A}}2\ol{\mf{P}}(0,0,2,2)$ are shown below, again grouped in columns by the power of $\beta,$ and with the corresponding monomial listed below each tableau. As in Example \ref{ex:a2p_0}, each tableau actually has an extra two rows numbered 1 and 2 at the bottom that are not drawn here.
    $$\arraycolsep=12pt\def\arraystretch{1.5}\begin{array}{llllll}
        \ytableausetup{boxsize=2em}
        \begin{ytableau}
            \tb{4} & \tb{4} \\
            \tb{3} & \tb{3}
        \end{ytableau} & 
        \begin{ytableau}
            \tb{4} & \tb{4}3 \\
            \tb{3} & \tb{2}
        \end{ytableau} &
        \begin{ytableau}
            \tb{4} & \tb{4}3 \\
            \tb{3}2 & \tb{2}
        \end{ytableau} &
        \begin{ytableau}
            \tb{4} & \tb{4}32 \\
            \tb{3}2 & \tb{1}
        \end{ytableau} &
        \begin{ytableau}
            \tb{4} & \tb{4}32 \\
            \tb{3}21 & \tb{1}
        \end{ytableau} \\
        \ol{\mf{P}}_{(0,0,2,2)} & \beta\cdot\ol{\mf{P}}_{(0,1,2,2)} & \beta^2\cdot\ol{\mf{P}}_{(0,2,2,2)} & \beta^3\cdot\ol{\mf{P}}_{(1,2,2,2)} & \beta^4\cdot\ol{\mf{P}}_{(2,2,2,2)} \\ \\
        \begin{ytableau}
            \tb{4} & \tb{2} \\
            \tb{3} & \tb{3}
        \end{ytableau} & \begin{ytableau}
            \tb{4} & \tb{4}3 \\
            \tb{3} & \tb{1}
        \end{ytableau} &
        \begin{ytableau}
            \tb{4} & \tb{4}3 \\
            \tb{3}1 & \tb{1}
        \end{ytableau} & & \\
        \ol{\mf{P}}_{(0,1,2,1)} & \beta \cdot\ol{\mf{P}}_{(1,0,2,2)} & \beta^2\cdot\ol{\mf{P}}_{(2,0,2,2)} & & \\ \\
        \begin{ytableau}
            \tb{4} & \tb{1} \\
            \tb{3} & \tb{3}
        \end{ytableau} &
        \begin{ytableau}
            \tb{4}2 & \tb{2} \\
            \tb{3} & \tb{3}
        \end{ytableau} & & \\
        \ol{\mf{P}}_{(1,0,2,1)} & \beta\cdot\ol{\mf{P}}_{(0,2,2,1)} & & \\ \\
        & \begin{ytableau}
            \tb{4}1 & \tb{1} \\
            \tb{3} & \tb{3}
        \end{ytableau} & & \\
        & \beta\cdot\ol{\mf{P}}_{(2,0,2,1)} & &
    \end{array}$$
\end{example}


\subsection{Glide polynomials \texorpdfstring{$\ol{\mf{F}}_a$}{F} and quasiLascoux polynomials \texorpdfstring{$\ol{\mf{Q}}_a$}{Q}}

We will now define the last two types of polynomials and explain the transition formula between them. For weak compositions $a$ and $b$, $a^+$ denotes the nonzero elements of $a.$ We say that $b$ \emph{\tb{\tcb{dominates}}} $a$ and write $b \ge a$ if $b_1 + \dots + b_i \ge a_1 + \dots + a_i$ for every $i.$ Then the \emph{\tb{\tcb{glide polynomial}}} $\ol{\mf{F}}_a$ is $$\ol{\mf{F}}_a := \sum_{\substack{b\ge a \\ b^+ = a^+}}\ol{\mf{P}}_a,$$ and analogously, the \emph{\tb{\tcb{quasiLascoux polynomial}}} $\ol{\mf{Q}}_a$ is $$\ol{\mf{Q}}_a := \sum_{\substack{b\ge a \\ b^+=a^+}} \ol{\mf{A}}_a.$$ A \emph{\tb{\tcb{set-valued quasi-skyline filling}}} of shape $a$ is the same as a semistandard set-valued filling, except that (S.5) is replaced with the following property:
\begin{itemize}[leftmargin=1.5cm]
    \item[(S.$5'$)] The leftmost anchor in row $i$ is at most $i,$ and the anchors in the left column decrease from top to bottom.
\end{itemize}
We write $\ol{\mf{Q}}{\sf{SSF}}(a)$ for the set of set-valued quasi-skyline fillings of shape $a.$ Then $\ol{\mf{Q}}{\sf{SSF}}(a)$ is a superset of $\ol{\mf{A}}{\sf{SSF}}(a)$ of $a$, and in fact $$\ol{\mf{Q}}{\sf{SSF}}(a) = \bigcup_{\substack{b\ge a \\ b^+ = a^+}} \ol{\mf{A}}{\sf{SSF}}(b).$$ It follows that we can alternately define $\ol{\mf{Q}}_a$ as $$\ol{\mf{Q}}_a = \sum_{T\in \ol{\mf{Q}}{\sf{SSF}}(a)} \beta^{|T|-|a|}\mathbf{x}^{\tn{wt}(T)}.$$ A tableau $T\in \ol{\mf{Q}}{\sf{SSF}}(a)$ is \emph{\tb{\tcb{quasiYamanouchi}}} if for every $i$ in $T$, at least one of the following holds:
\begin{itemize}
    \item The leftmost $i$ is the anchor in the leftmost column of row $i.$
    \item There is an $i+1$ weakly to the right of the leftmost $i$ and in a different box.
\end{itemize}
We write $\ol{\mf{Q}}2\ol{\mf{F}}(a)$ for the set of quasiYamanouchi tableau of shape $a.$ We can now state the second expansion formula that we will need:

\begin{theorem}[Monical, Pechenik, and Searles \cite{Monical_Pechenik_Searles_2021}, Theorem 4.12]\label{thm:q2f}
    The glide expansion for the quasiLascoux polynomials is $$\ol{\mf{Q}}_a = \sum_{T\in \ol{\mf{Q}}2\ol{\mf{F}}(a)} \beta^{|T|-|a|}\ol{\mf{F}}_{\tn{wt}(T)}.$$
\end{theorem}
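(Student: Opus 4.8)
The plan is to deduce Theorem~\ref{thm:q2f} from the already-established kaon expansion of Theorem~\ref{thm:a2p}, exploiting the fact that both the quasiLascoux polynomials and the glides are built from their ``atomic'' counterparts by summing over left-slides. Concretely, I would start from the defining identities $\ol{\mf{Q}}_a=\sum_{b\ge a,\,b^+=a^+}\ol{\mf{A}}_b$ and $\ol{\mf{F}}_c=\sum_{d\ge c,\,d^+=c^+}\ol{\mf{P}}_d$, and regard the claimed formula as an equality of two expansions in the kaon basis $\{\ol{\mf{P}}_c\}$. Applying Theorem~\ref{thm:a2p} to each Lascoux atom on the left, and using $|b|=|a|$ whenever $b^+=a^+$, gives
\[
\ol{\mf{Q}}_a=\sum_{\substack{b\ge a\\ b^+=a^+}}\ol{\mf{A}}_b
=\sum_{\substack{b\ge a\\ b^+=a^+}}\ \sum_{U\in\ap(b)}\beta^{\,|U|-|a|}\,\ol{\mf{P}}_{\tn{wt}(U)},
\]
while expanding the conjectured right-hand side by the definition of $\ol{\mf{F}}$ gives
\[
\sum_{T\in\ol{\mf{Q}}2\ol{\mf{F}}(a)}\beta^{\,|T|-|a|}\,\ol{\mf{F}}_{\tn{wt}(T)}
=\sum_{T\in\ol{\mf{Q}}2\ol{\mf{F}}(a)}\beta^{\,|T|-|a|}\sum_{\substack{d\ge \tn{wt}(T)\\ d^+=\tn{wt}(T)^+}}\ol{\mf{P}}_d.
\]
Since the kaons form a basis of $\Z[x_1,\dots,x_n][\beta]$ (recalled in the introduction), it suffices to match the coefficient of each $\ol{\mf{P}}_c$.

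The key simplification is that in both expansions the exponent of $\beta$ attached to $\ol{\mf{P}}_c$ is forced to be $|c|-|a|$: on the left, $|U|=|\tn{wt}(U)|=|c|$, and on the right, the condition $c^+=\tn{wt}(T)^+$ forces $|\tn{wt}(T)|=|c|$, hence $|T|=|c|$. Cancelling the common factor $\beta^{\,|c|-|a|}$ therefore reduces the entire theorem to the purely enumerative identity
\begin{gather*}
\#\{(b,U): b\ge a,\; b^+=a^+,\; U\in\ap(b),\; \tn{wt}(U)=c\}\\
=\;\#\{T\in\ol{\mf{Q}}2\ol{\mf{F}}(a): c\ge \tn{wt}(T),\; c^+=\tn{wt}(T)^+\}
\end{gather*}
for every weak composition $c$. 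No powers of $\beta$ remain, so what is left is a statement about counting fillings: on the left the \emph{shape} is left-slid (from $a$ to $b$) while the weight is fixed at $c$, and on the right the shape is fixed at $a$ while the \emph{weight} is right-contracted (from $c$ down to $\tn{wt}(T)$).

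To prove this identity I would construct an explicit bijection trading the leftward slide of the shape for the rightward slide of the weight. Given a pair $(b,U)$, the filling $U$ is a meson-highest skyline filling of a left-slide $b$ of $a$; I would restore the shape $a$ by sliding whole rows back to the positions prescribed by $a$ and relabelling the anchors so that (S.$5'$) and the quasiYamanouchi condition hold, recording the displacement in the passage from $c=\tn{wt}(U)$ to a dominated weight $\tn{wt}(T)\le c$ with the same nonzero parts. The inverse takes a quasiYamanouchi $T$ of shape $a$ together with a refinement of the dominance relation $c\ge \tn{wt}(T)$ and reconstructs both the shape $b$ and the meson-highest filling $U$. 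I expect the main obstacle to be exactly this bijection: one must verify that the row-sliding-and-relabelling move carries meson-highest fillings of left-slides of $a$ onto quasiYamanouchi fillings of $a$ in an invertible way, and, crucially, that the meson-highest condition on $U$ corresponds precisely to the quasiYamanouchi condition on $T$ under the correspondence (this is where the two distinct ``highest'' notions, one for kaons and one for glides, must be reconciled). Once this correspondence is checked, summing back over $c$ and reinstating the factor $\beta^{\,|c|-|a|}$ yields the glide expansion. As sanity checks I would confirm that the $\beta=0$ specialization recovers the known expansion of quasikey polynomials into fundamental slide polynomials, and test small shapes such as $a=(0,0,2,2)$ against the data underlying Examples~\ref{ex:a2p_0} and \ref{ex:a2p_1}.
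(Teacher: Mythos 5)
First, a point of context: the paper does not prove this statement at all --- it is imported verbatim as Theorem 4.12 of \cite{Monical_Pechenik_Searles_2021} and used as a black box --- so there is no internal proof to compare against, and your proposal must stand or fall on its own merits relative to the argument in \cite{Monical_Pechenik_Searles_2021}. Your reduction step is correct and cleanly executed: expanding $\ol{\mf{Q}}_a$ atomwise via Theorem~\ref{thm:a2p}, expanding each $\ol{\mf{F}}_{\tn{wt}(T)}$ into kaons, invoking linear independence of the kaons over $\Z[\beta]$, and observing that the power of $\beta$ attached to $\ol{\mf{P}}_c$ is forced to be $|c|-|a|$ on both sides does legitimately reduce the theorem to your enumerative identity (I verified it against the paper's data for $a=(0,0,2,2)$ at several weights $c$, and it checks out).

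The genuine gap is that the proof stops exactly where the theorem begins: the bijection between pairs $(b,U)$ with $U\in\ap(b)$ and quasiYamanouchi fillings $T$ of shape $a$ with $c\ge\tn{wt}(T)$, $c^+=\tn{wt}(T)^+$ is asserted, not constructed, and you yourself flag it as the main obstacle. Moreover, the mechanism you sketch --- ``sliding whole rows back to the positions prescribed by $a$'' --- does not describe what the map must actually do. Take $a=(0,0,2,2)$ and $c=(1,0,2,2)$: the unique contributing pair on the left has $b=a$ (so no rows move at all), namely the filling of weight $(1,0,2,2)$ in Example~\ref{ex:a2p_1} with boxes $\{4\},\{4,3\}$ in row $4$ and $\{3\},\{1\}$ in row $3$, while the matching $T$ is the quasiYamanouchi filling of weight $(0,1,2,2)$ in Example~\ref{ex:q2f_1}; the passage changes the single anchor $1$ in row $3$ to a $2$, inside a row whose other entry is a $3$. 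So the correspondence is an entrywise destandardization (lowering or raising individual leftmost occurrences subject to the glide conditions), not a row-rigid slide-and-relabel, and the hard verification --- that it invertibly converts the meson-highest condition, which refers to the next-smallest entry $i^{\uparrow}$ actually present, into the quasiYamanouchi condition, which refers literally to $i+1$ --- is precisely the content of the destandardization arguments by which \cite{Monical_Pechenik_Searles_2021} prove Theorem 4.12 directly at the level of fillings (making your kaon-basis detour, while valid, unnecessary in their treatment). Until that correspondence is defined and proved invertible, the proposal is a correct reformulation of the theorem rather than a proof of it.
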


\begin{example}\label{ex:q2f_0}
    Returning to $a=(0,0,1,2)$ as in Example \ref{ex:a2p_0}, the two tableaux from the second row in Example \ref{ex:a2p_0} are in $\ol{\mf{A}}2\ol{\mf{P}}(0,0,1,2)$ but not in $\ol{\mf{Q}}2\ol{\mf{F}}(0,0,1,2)$ because they have a 1 that is not an anchor in the left column of its own row but do not have any 2's, violating the quasiYamanouchi property. Thus, the tableaux in the glide expansion of $\ol{\mf{Q}}_{(0,0,1,2)}$ are the ones shown below, with the corresponding terms of the expansion listed below them. Again, we omit the bottom two empty rows of each tableau.
        $$\arraycolsep=12pt\def\arraystretch{1.5}\begin{array}{llllll}
        \ytableausetup{boxsize=2em}
        \begin{ytableau}
            \tb{4} & \tb{4} \\
            \tb{3}
        \end{ytableau} & 
        \begin{ytableau}
            \tb{4} & \tb{4}3 \\
            \tb{3}
        \end{ytableau} &
        \begin{ytableau}
            \tb{4} & \tb{4}3 \\
            \tb{3}2
        \end{ytableau} &
        \begin{ytableau}
            \tb{4} & \tb{4}32 \\
            \tb{3}2
        \end{ytableau} &
        \begin{ytableau}
            \tb{4} & \tb{4}32 \\
            \tb{3}21
        \end{ytableau} &
        \begin{ytableau}
            \tb{4} & \tb{4}321 \\
            \tb{3}21
        \end{ytableau}\\
        \ol{\mf{F}}_{(0,0,1,2)} & \beta\cdot \ol{\mf{F}}_{(0,0,2,2)} & \beta^2\cdot\ol{\mf{F}}_{(0,1,2,2)} & \beta^3\cdot\ol{\mf{F}}_{(0,2,2,2)} & \beta^4\cdot \ol{\mf{F}}_{(1,2,2,2)} & \beta^5\cdot\ol{\mf{F}}_{(2,2,2,2)}
        \end{array}$$
\end{example}

\begin{example}\label{ex:q2f_1}
    Using $a=(0,0,2,2)$ as in Example \ref{ex:a2p_1}, the four tableaux with a 1 but not a 2 are no longer valid, as they violate the quasiYamanouchi property. Thus, the tableaux in $\ol{\mf{Q}}2\ol{\mf{F}}(0,0,2,2)$ and their corresponding monomials in the glide expansion of $\ol{\mf{Q}}_{(0,0,2,2)}$ are the ones shown below. As in our other examples, the two empty rows at the bottom are omitted.
    $$\arraycolsep=12pt\def\arraystretch{1.5}\begin{array}{llllll}
        \ytableausetup{boxsize=2em}
        \begin{ytableau}
            \tb{4} & \tb{4} \\
            \tb{3} & \tb{3}
        \end{ytableau} & 
        \begin{ytableau}
            \tb{4} & \tb{4}3 \\
            \tb{3} & \tb{2}
        \end{ytableau} &
        \begin{ytableau}
            \tb{4} & \tb{4}3 \\
            \tb{3}2 & \tb{2}
        \end{ytableau} &
        \begin{ytableau}
            \tb{4} & \tb{4}32 \\
            \tb{3}2 & \tb{1}
        \end{ytableau} &
        \begin{ytableau}
            \tb{4} & \tb{4}32 \\
            \tb{3}21 & \tb{1}
        \end{ytableau} \\
        \ol{\mf{F}}_{(0,0,2,2)} & \beta\cdot\ol{\mf{F}}_{(0,1,2,2)} & \beta^2\cdot\ol{\mf{F}}_{(0,2,2,2)} & \beta^3\cdot\ol{\mf{F}}_{(1,2,2,2)} & \beta^4\cdot\ol{\mf{F}}_{(2,2,2,2)} \\ \\
        \begin{ytableau}
            \tb{4} & \tb{2} \\
            \tb{3} & \tb{3}
        \end{ytableau} & 
        \begin{ytableau}
            \tb{4}2 & \tb{2} \\
            \tb{3} & \tb{3}
        \end{ytableau} & & \\
        \ol{\mf{F}}_{(0,1,2,1)} & \beta\cdot\ol{\mf{F}}_{(0,2,2,1)} & &
        \end{array}$$
\end{example}

\section{Proof of Theorem \ref{thm:alt_sum} and examples of the involution}\label{sec:alt_sum_proof}

\subsection{Proof for \texorpdfstring{$\sum_b Q_b^a(-1)$}{Q sum}}

    By Theorem \ref{thm:a2p}, $$\sum_b Q_b^a(-1) = \sum_{T\in \ol{\mf{A}}2\ol{\mf{P}}(a)} (-1)^{|T|-|a|}.$$ To prove that this equals 0 or 1, construct the following map $\iota$ on the set $T\in \ol{\mf{A}}2\ol{\mf{P}}(a)$:
    \begin{enumerate}
        \item Set $m$ to be the smallest entry appearing in $T$.
        \item Find the rightmost column of $T$ where a free $m$ can be either added or removed to get a new tableau that is still in $\ol{\mf{A}}2\ol{\mf{P}}(a)$, and add or remove $m$ from that column. 
        \item If no such column exists, reset $m$ to be the next smallest entry $m^\uparrow$ and repeat Step 2. Continue until either Step 2 succeeds or $m$ is equal to the maximum entry of $T$, in which case we set $\iota(T):=T$.
    \end{enumerate}
    We can see that $\iota$ is well-defined on $\ol{\mf{A}}2\ol{\mf{P}}(a)$ because removing an $m$ in a given column can only be done in one way, and (S.4) guarantees that adding a free $m$ can also be done in only one way. Also, whenever $\iota(T)\ne T$, it is sign reversing since $|\iota(T)| = |T| \pm 1,$ as we are either adding or removing one entry of $T$ to get $\iota(T)$. We will show that $\iota$ is an involution and that there is at most one tableau $T$ for which $\iota(T) = T$, which will imply Theorem \ref{thm:a2p}. To show these facts, we use the following lemma:

    \begin{lemma}\label{lem:remove_min}
        If $\iota(T)\ne T$, the entry $m$ that is added or removed to get from $T$ to $\iota(T)$ is the largest entry such that for every $m' < m$ in $T$, row $m'$ has all entries equal to $m'$ and is weakly longer than all rows above it. Moreover, if the maximal $m$ satisfying that condition is not the largest element of $T$, then Step 2 succeeds and hence $\iota(T)\ne T$.
    \end{lemma}

    \begin{proof}
        First we note that for the choice of $m$ described in Lemma \ref{lem:remove_min}, no $m' < m$ can possibly be added to or removed from a column of $T$. We cannot remove an $m'$ because every $m'$ is an anchor. We also cannot add a free $m'$ because the only columns that do not already contain an $m'$ are those in rows longer than row $m'$, meaning rows below row $m'$, and those rows have anchors less than $m'$, meaning adding a free $m'$ would violate (S.2). This shows that the $m$ chosen for $\iota$ must be at least the $m$ described in Lemma \ref{lem:remove_min}.

        It remains to show that this choice of $m$ actually can be added to or removed from some column of $T$. We can remove all rows of $T$ below row $m$ without changing which entry of $T$ will be added or removed, since no entry from one of those rows will be the entry chosen or impact which entry is chosen. Thus, we may assume that $m$ is the smallest entry of $T$. We consider several cases: 
        \begin{itemize}
            \item If $T$ contains multiple $m$'s and at least one of them is free, then we can remove the rightmost free $m$ without violating any conditions. Thus, we can assume that either $T$ contains only one $m,$ or all $m$'s in $T$ are anchors.
            \item Suppose $T$ contains only one $m$, the $m$ is free, and it is not in the rightmost column. Then we can add a free $m$ to the rightmost column without violating any conditions.
            \item Suppose $T$ contains only one $m$, the $m$ is free, and it is in the rightmost column. Then by the meson-highest condition, $m^\uparrow$ must also appear in that same rightmost column in a different box from $m.$ However, if $m^\uparrow$ is an anchor in that column, then $m$ should instead be in the same box of the column as $m^\uparrow$ by (S.4), since $m^\uparrow$ is the smallest anchor in that column. Similarly, if $m^\uparrow$ is not an anchor, then both $m$ and $m^\uparrow$ must be in whichever box of the last column has the smallest anchor, again by (S.4). This contradicts $m$ and $m^\uparrow$ being in different boxes, so this case is impossible.
            \item Suppose now that every $m$ in $T$ is an anchor, but that not every column of $T$ contains an $m.$ Consider the rightmost column not containing an $m,$ say column $c.$ If $c$ is not the rightmost column of $T$, then column $c+1$ contains an $m$ as an anchor. This means we can add a free $m$ to column $c,$ since adding an $m$ to the box immediately to the left of the one containing the $m$ in column $c+1$ will not violate (S.2) or any other conditions. If $c$ is the rightmost column, then again a free $m$ can be added to column $c$ without violating any conditions.
            \item The only remaining case is that every column of $T$ contains an $m$ and all the $m$'s are anchors. The $m$ in the leftmost column must then be in row $m,$ so row $m$ must be nonempty, and it must be the lowest row of $T$ since $m$ is the smallest entry. By (S.2) and minimality of $m,$ every other box of row $m$ must then contain an $m$ and no free entries. The only thing left to check is that no $m$ appears in a higher row, or equivalently, row $m$ is at least as long as every other row. If not, let $c$ be the rightmost column of row $m.$ Then column $c+1$ contains an $m$ in some higher row. Taking that $m$ together with the box to its left and the $m$ in column $c$ then creates an upside-down L shape violating (S.3), since both $m$'s are anchors and thus $\beta = \gamma = m.$ This is a contradiction, so we conclude that row $m$ has maximal length. But then $m$ is not the maximal entry satisfying the condition of Lemma \ref{lem:remove_min}, since $m^\uparrow$ also satisfies the condition. Thus, this case is not possible.
        \end{itemize}    
        Thus, in each possible case, some $m$ can be added to or removed from $T$, which shows that the $m$ chosen is indeed the one described in the lemma statement.
    \end{proof}
    
    \begin{lemma}
        The map $\iota$ is an involution on $\ol{\mf{A}}2\ol{\mf{P}}(a)$.
    \end{lemma}

    \begin{proof}
    Note that as long as the same $m$ and the same column is chosen when applying $\iota$ to $\iota(T)$ as when applying it to $T$, we will get $\iota^2(T) = T,$ since the operations of adding and removing an $m$ from the same column cancel out. Thus, we consider the possible ways that a different $m$ or column could be chosen when computing $\iota(\iota(T))$:
    \begin{itemize}
        \item The same $m$ is chosen, but in a different column. Whichever column was used when finding $\iota(T)$ is certainly also a valid column to modify to find $\iota(\iota(T))$, so the only potential issue is if there is a column of $\iota(T)$ further to the right in which we can add or remove an $m.$ But then that same column would also have been modified when finding $\iota(T)$ instead of the one that was modified, because the existence or nonexistence of an extra $m$ in some column further to the left will not impact whether or not (S.1)-(S.5) and the meson-highest condition hold if an $m$ is added or removed from a column further to the right, so modifying the further right column is valid for $\iota(T)$ if and only if it is valid for $T$. This is a contradiction, so provided the same $m$ is chosen, the same column will also be chosen.
        
        \item A different $m$ is chosen. One potential issue is that after adding or removing an $m,$ a smaller value $m'$ can now be either added to or removed from $T$, but this cannot happen because $m'$ is the value chosen if and only if the condition in Lemma \ref{lem:remove_min} applies to $m'$, and that condition is not impacted by adding or removing an entry of $T$ that is larger than $m'.$ The only other potential issue is that there is only one $m$ in $T$ and it gets removed when going from $T$ to $\iota(T)$, so $\iota(T)$ no longer contains an $m.$ For this to happen, the lone $m$ must be in the rightmost column of $T$, because otherwise it can be added to the rightmost column without violating any of the conditions. It must also be free, since otherwise it could not be removed. But by the argument in the proof of Lemma \ref{lem:remove_min}, it is not possible that there is only one $m$ in $T$ and that the lone $m$ is a free entry in the far right column. Thus, this case cannot actually happen.
    \end{itemize}
    Thus, $\iota$ is an involution. 
    \end{proof}
    
    To complete the proof for the $\sum_b Q_b^a(-1)$ case, it remains to show:

    \begin{lemma}
        We have $\iota(T)=T$ for exactly one $T\in \ap(a)$ if the nonzero parts of $a$ are nonincreasing, and $\iota(T)\ne T$ for all $T \in \ap(a)$ otherwise.
    \end{lemma}

    \begin{proof}
        Note that the condition in Lemma \ref{lem:remove_min} is vacuously satisfied by the smallest entry $m$ appearing in $T$, since then there are no entries $m'<m$ to consider. Thus, the only way we fail to find an $m$ in Step 2 is if the maximal $m$ we get from Lemma \ref{lem:remove_min} is the largest entry of $T$, which means the condition in Lemma \ref{lem:remove_min} applies to \emph{every} entry $m$ of $T$. This is equivalent to saying that all nonempty rows of $T$ are weakly decreasing in length and that every box of $T$ is filled with a single entry equal to the corresponding row number. No such tableau $T$ exists if the nonzero parts of $a$ are not weakly decreasing, and exactly one such $T$ exists if they are weakly decreasing.
    \end{proof}

    Thus, the sum is 0 if the parts of $a$ are not weakly decreasing, and if they are, the sum is 1 because the single unpaired $T$ has no free entries and thus has positive sign since $|T|-|a|=0.$ \qed

\subsection{Proof for \texorpdfstring{$\sum_b M_b^a(-1)$}{M sum}}

    The proof for $\sum_b M_a^b(-1)$ is essentially identical. Instead of Theorem \ref{thm:a2p}, we use Theorem \ref{thm:q2f} to get $$\sum_b M_b^a(-1) = \sum_{T\in \ol{\mf{Q}}2\ol{\mf{F}}(a)}\beta^{|T|-|a|}\ol{\mf{F}}_{\tn{wt}(T)}.$$ We then define $\iota$ in the same manner as before, except that it is an involution on $\ol{\mf{Q}}2\ol{\mf{F}}$. The only difference in the proof is that if $m$ is the entry chosen by $\iota$ and $m' < m$ is another entry of $T$, it could be the case that instead of all the occurrences of $m'$ appearing as anchors in row $m',$ they all appear as anchors in some higher row. Thus, $m$ will instead be the smallest entry such that for every $m' < m$, all occurrences of $m'$ appear as anchors in the same row, and that row is weakly longer than all rows above it. 
    
    So, we will have $\iota(T)=T$ if and only if the nonzero elements of $a$ are weakly decreasing, $T$ has no free entries, and every row of $T$ contains only repeats of the same entry. We claim that in fact the repeated entry in each row must equal the row index. If not, there is some $m$ that is the anchor of a row higher than row $m.$ Then by the quasiYamanouchi condition, $m^\uparrow = m+1,$ since $m$ is not an anchor in its own row. Then $m+1$ must also be the anchor of some row higher than its own row. By the same logic, we must have $(m+1)^\uparrow = m+2,$ and $m+2$ must then be the anchor of a row higher than row $m+2.$ Continuing this logic inductively, we find that the largest entry $M$ appearing in $T$ is also an anchor of some row higher than row $M$. But then the quasiYamanouchi condition requires $T$ to contain an entry equal to $M+1$, which is a contradiction. So in fact, every entry $m$ of a tableau $T$ for which $\iota(T)=T$ must be an anchor in its own row $m$ rather than in another row. 
    
    This shows that again, there is exactly one unpaired tableau $T$ in the case where the parts of $a$ are weakly decreasing and no unpaired tableaux otherwise. As with $\sum_b Q_b^a(-1)$, in the case where such an unpaired tableau exists, it appears with positive sign since it has no free entries. Thus, we get that $\sum_b M_b^a(-1) = 1$ if the parts of $a$ are weakly decreasing and $\sum_b M_b^a(-1) = 0$ otherwise, as desired. This completes the proof of Theorem \ref{thm:alt_sum}. \qed

\bigskip

We end with several examples to illustrate the involution.



\subsection{Example where \texorpdfstring{$\sum_b Q_b^a(-1) = \sum_b M_b^a(-1) = 0$}{Q=M=0}}

\begin{example}\label{ex:a2p_involution}
    Revisiting $a=(0,0,1,2)$ as in Examples \ref{ex:a2p_0} and \ref{ex:q2f_0}, we get
    \begin{align*}
        \sum_b Q_b^a(\beta) &= 1+\beta + 2\beta^2 + 2\beta^3 + \beta^4 + \beta^5, \\
        \sum_b M_b^a(\beta) &= 1+\beta+\beta^2+\beta^3+\beta^4+\beta^5.
    \end{align*}
    Plugging in $\beta=-1$ gives
    \begin{align*}
        \sum_b Q_b^a(\beta) &= 1-1+2-2+1-1 = 0, \\
        \sum_b M_b^a(\beta) &= 1-1+1-1+1-1 = 0.
    \end{align*}
    This matches Theorem \ref{thm:alt_sum} since the nonzero parts of $a=(0,0,1,2)$ are not weakly decreasing, so we expect the sums to be 0. The pairing induced by $\iota$ for the tableaux in $\ol{\mf{A}}_2\ol{\mf{P}}(0,0,1,2)$ is shown below:
    $$\arraycolsep=6pt\begin{array}{llllllllllll}
        \ytableausetup{boxsize=2em}
                \begin{ytableau}
            \tb{4} & \tb{4} \\
            \tb{3}
        \end{ytableau} & \longleftrightarrow &
        \begin{ytableau}
            \tb{4} & \tb{4}3 \\
            \tb{3}
        \end{ytableau} & &
        \begin{ytableau}
            \tb{4} & \tb{4}3 \\
            \tb{3}2
        \end{ytableau} & \longleftrightarrow &
        \begin{ytableau}
            \tb{4} & \tb{4}32 \\
            \tb{3}2
        \end{ytableau} & &
        \begin{ytableau}
            \tb{4} & \tb{4}32 \\
            \tb{3}21
        \end{ytableau} & \longleftrightarrow &
        \begin{ytableau}
            \tb{4} & \tb{4}321 \\
            \tb{3}21
        \end{ytableau}\\ \\
        & & & & \begin{ytableau}
            \tb{4} & \tb{4}3 \\
            \tb{3}1
        \end{ytableau} & \longleftrightarrow &
        \begin{ytableau}
            \tb{4} & \tb{4}31 \\
            \tb{3}1
        \end{ytableau} \\
    \end{array}$$
    The pairing of the tableaux in $\ol{\mf{Q}}2\ol{\mf{F}}(0,0,1,2)$ is the same except with the two tableaux in the bottom row omitted.
\end{example}



\subsection{Example where \texorpdfstring{$\sum_b M_b^a(-1)=\sum_b Q_b^a(-1) = 1$}{Q=M=1}}

\begin{example}\label{ex:q2f_involution}
    Revisiting $a=(0,0,2,2)$ as in Examples \ref{ex:a2p_1} and \ref{ex:q2f_1}, we get 
    \begin{align*}
        \sum_b Q_b^a(\beta) &= 3+4\beta+2\beta^2+\beta^3 + \beta^4, \\
        \sum_b M_b^a(\beta) &= 2+2\beta+\beta^2+\beta^3+\beta^4.
    \end{align*}
    When $\beta=-1,$ these sums become
    \begin{align*}
        \sum_b Q_b^a(-1) &= 3-4+2-1+1 = 1, \\
        \sum_b M_b^a(-1) &= 2-2+1-1+1 = 1.
    \end{align*}
    This again matches Theorem \ref{thm:alt_sum} because the nonzero parts of $a=(0,0,2,2)$ are in weakly decreasing order, so we expect both sums to be 1. The pairing of the tableaux in $\ol{\mf{A}}2\ol{\mf{P}}(0,0,2,2)$ induced by $\iota$ is shown below. Note that the top left tableau is the unique one that is unpaired.
    $$\arraycolsep=6pt\begin{array}{lllllllll}
        \begin{ytableau}
            \tb{4} & \tb{4} \\
            \tb{3} & \tb{3}
        \end{ytableau} & &
        \begin{ytableau}
            \tb{4} & \tb{4}3 \\
            \tb{3} & \tb{2}
        \end{ytableau} & \longleftrightarrow
        \begin{ytableau}
            \tb{4} & \tb{4}3 \\
            \tb{3}2 & \tb{2}
        \end{ytableau} & &
        \begin{ytableau}
            \tb{4} & \tb{4}32 \\
            \tb{3}2 & \tb{1}
        \end{ytableau} & \longleftrightarrow
        \begin{ytableau}
            \tb{4} & \tb{4}32 \\
            \tb{3}21 & \tb{1}
        \end{ytableau} \\ \\
        & & \begin{ytableau}
            \tb{4} & \tb{4}3 \\
            \tb{3} & \tb{1}
        \end{ytableau} & \longleftrightarrow
        \begin{ytableau}
            \tb{4} & \tb{4}3 \\
            \tb{3}1 & \tb{1}
        \end{ytableau} & & \\ \\
        \begin{ytableau}
            \tb{4} & \tb{2} \\
            \tb{3} & \tb{3}
        \end{ytableau} & \longleftrightarrow &
        \begin{ytableau}
            \tb{4}2 & \tb{2} \\
            \tb{3} & \tb{3}
        \end{ytableau} \\ \\
        \begin{ytableau}
            \tb{4} & \tb{1} \\
            \tb{3} & \tb{3}
        \end{ytableau} & \longleftrightarrow &
        \begin{ytableau}
            \tb{4}1 & \tb{1} \\ 
            \tb{3} & \tb{3}
        \end{ytableau}
    \end{array}$$
    The pairing of tableaux in $\ol{\mf{Q}}2\ol{\mf{F}}(0,0,2,2)$ is the same except without the four tableaux containing 1's but not 2's (i.e. the tableaux in the second and fourth rows from the top).
\end{example}

\subsection{Example of a tableau needing multiple iterations of Step 2}

\begin{example}
    Consider the following tableau $T\in \ol{\mf{Q}}2\ol{\mf{F}}(3,1,2,4,5) \se \ol{\mf{A}}2\ol{\mf{P}}(3,1,2,4,5)$:
    $$\begin{ytableau}
        \tb{5} & \tb{5}4 & \tb{4} \\
        \tb{4} \\
        \tb{3} & \tb{3} \\
        \tb{2} & \tb{2} & \tb{2} & \tb{2} \\
        \tb{1} & \tb{1} & \tb{1} & \tb{1} & \tb{1}
    \end{ytableau}$$
    No free 1 can be added to or removed from $T$, since row 1 consists only of 1's and is weakly longer than all rows above it (as in Lemma \ref{lem:remove_min}), and no free 2 can be added to or removed from $T$ for the same reason. However, a free 3 can be added to column 3, since row 1 is longer than row 3. Thus, $m=3$ is the value chosen in Step 2, and the tableau $\iota(T)$ is as shown below:
    $$\begin{ytableau}
        \tb{5} & \tb{5}4 & \tb{4}3 \\
        \tb{4} \\
        \tb{3} & \tb{3} \\
        \tb{2} & \tb{2} & \tb{2} & \tb{2} \\
        \tb{1} & \tb{1} & \tb{1} & \tb{1} & \tb{1}
    \end{ytableau}$$
\end{example}

\section*{Acknowledgments}

I'm thankful to Oliver Pechenik for suggesting the problem, and to Oliver Pechenik, Karen Yeats, Sophie Spirkl, and the anonymous reviewers for providing helpful comments. I was partially supported by the Natural Sciences and Engineering Research Council of Canada (NSERC) grant RGPIN-2022-03093.

\printbibliography

\end{document}